\documentclass{article}
\usepackage{fancyhdr}
\usepackage{tikz,amsmath,amsthm,amssymb,amsfonts,graphicx,dsfont,algorithmic}
\usepgflibrary{decorations.pathmorphing}
\usetikzlibrary{decorations.pathmorphing}
 \usepackage{caption}
 \usepackage{subfigure}
 \usepackage{authblk}
\usepackage{colortbl,color,hhline}
\usepackage{pst-all}
\usepackage{subfigure}\usepackage{mathtools}

\DeclarePairedDelimiter\floor{\lfloor}{\rfloor}

\pdfpagewidth  8.55in \pdfpageheight 11in

\parindent      0.0in
\parskip        0.05in
\hoffset        0ex \voffset        0cm \topmargin      0cm
\oddsidemargin  0in \evensidemargin 0in \headheight     14pt
\footskip       2.5cm \textwidth      6.2in \textheight     8.0in

\lhead{\slshape\leftmark} \rhead{\thepage} \chead{} \rfoot{}
\lfoot{} \cfoot{}


\newtheorem{thm}{Theorem}[section]

\newtheorem{lem}[thm]{Lemma}

\newtheorem{defn}[thm]{Definition}

\title{A proof for Padberg's conjecture on rank of matching polytope}

\author[]{Ashwin Arulselvan\thanks{arulsel@math.tu-berlin.de} } 
\author[]{Daniel Karch\thanks{karch@math.tu-berlin.de}}

\affil[]{\small Institut f\"ur Mathematik\\Technische Universit\"at Berlin\\ 10623 Berlin, Germany}




\begin{document}
\date{}
\maketitle

\begin{abstract}
Padberg~\cite{Pa:13} introduced a geometric notion of ranks for (mixed) integer rational polyhedrons and conjectured that the geometric rank of the matching polytope is one. In this work, we prove that this conjecture is true.
\end{abstract}

\section{Preliminaries}
Padberg~\cite{Pa:13} defined the notion of geometric ranks for the facets of the polytopes. All definitions provided in this section could be found in~\cite{Pa:13}.
Let the following be the set of feasible solutions to an integer linear program.
\[Q = \{x \in \mathbb{Z}^n: Ax\le b\}\]  
We can describe $P=conv(Q)$ (convex hull of $Q$) by the solution set of \textit{ideal}, i.e. a minimal linear description as follows.
\[P = \{x \in \mathbb{R}^n: H_1x= h_1, H_2x\le h_2\},\]  
where $(H_1, h_1)$ is $s\times (n+1)$ full rank matrix. Thus the dimension of $P$ is $dim (P) = n - s$. $(H_2,h_2)$ is $t \times(n+1)$ matrix of rationals for some integer $t$. The rows of the matrix defines a facet of $P$.
Let $\mathcal{F}$ be the set of row vectors $(h, h_0)\in\mathbb{R}^{n+1}$ from the matrix $(H_2,h_2)$. Each of these inequalities induces a facet of $P$. For a $(f,f_0)\in\mathcal{F}$, the facet
\[F_f = \{x\in P: fx=f_0\}\]
is a polyhedron of dimension $dim(P)-1$. In order to define the notion of ranks, Padberg~\cite{Pa:13} required the notion of facet of facets, i.e., the ridges of $P$. Let
\[\mathcal{H}^f = \{(h,h_0)\in \mathcal{F}: dim(F_f\cap F_h) = dim(P)-2\}\]
A facet defining inequality $hx\le h_0$ of $P$ is qualified for membership in set $\mathcal{H}^f$ if and only if it defines a facet of $F_f$ (ridge of $P$). If $\mathcal{F}$ is ideal for $P$, for a subset $\mathcal{F}_{\min}\subseteq \mathcal{F}$, we define
\[P_{\min} = \{x\in \mathbb{R}^n: H_1x=h_1, fx\le f_0, \forall (f,f_0)\in \mathcal{F}_{\min}\}\]
\begin{defn}{}
We call a subset $\mathcal{F}_{\min}$ as a minimal formulation for  $P$, if it satisfies
\begin{itemize}
\item[(i)] $P = conv(P_{\min}\cap\mathbb{Z}^n)$
\item[(ii)] $P\cap\mathbb{Z}^n \subset P_{\min}^h \cap\mathbb{Z}^n, \forall (h,h_0) \in \mathcal{F}_{\min}$,
\end{itemize}
where
\[P_{\min}^h = \{x\in \mathbb{R}^n: H_1x=h_1, fx\le f_0, \forall (f,f_0)\in \mathcal{F}_{\min} - (h,h_0)\}\] and the containment in (ii) is proper.
\end{defn}

Rank 0 facets of a polytope $P$ are given by 
\[\mathcal{F}_0 = \{(f,f_0)\in \mathcal{F}: \text{if $(f,f_0)$ is in some minimal formulation of $P$}\}\]
We now build a hierarchy of facets as follows: Given a family of non-empty disjoint family of facets of $P$, $\mathcal{F}_0, \mathcal{F}_1,\dots, \mathcal{F}_{\rho}$, such that $\bigcup_{i=0}^{\rho}\mathcal{F}_i \neq \mathcal{F}$, we define
\[\mathcal{F}_{\rho+1} = \{(f,f_0)\in \mathcal{F}-\bigcup_{i=0}^{\rho}\mathcal{F}_i: \exists (h,h_0)\in \bigcup_{i=0}^{\rho}\mathcal{F}_i, \text{ s.t. } dim(F_f\cap F_h) = dim(P)-2\}\]

\begin{defn}
If $\mathcal{F}= \bigcup_{i=0}^{\rho}\mathcal{F}_i$, then we call $\rho(P)=\rho$ as the geometric rank of the polyhedron $P$.
\end{defn}

Padberg~\cite{Pa:13} has shown that the geometric number is finite and well defined number for every polyhedron.

\section{Properties of factor critical graphs}

We discuss a few facts about factor critical graphs and 2-connected graphs (node connected) that we will be requiring in our proofs. Much of these could be found in~\cite{LoPl:86}.

\begin{defn}{}
A graph $G(V,E)$ is a ``factor critical graph'' if for every node $v\in V, G- \{v\}$ has a perfect matching (p.m).
\end{defn}

\begin{defn}{}
For a subset of nodes $U'\subset V$, the node induced subgraph $G[U']$ of $G$ is a ``nice subgraph'' of $G$ if $G-V(G[U'])$ has a p.m.
\end{defn}

We sketch the simple proof of the following lemma, as we will be using this idea in our proof as well.
\begin{lem}{\cite{LoPl:86}}\label{lem:oc}
If $G$ is a factor citical graph, then every edge of $G$ lies in some nice odd cycle.
\end{lem}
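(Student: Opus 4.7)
Given an edge $uv$ in the factor critical graph $G$, the plan is to build a nice odd cycle through $uv$ from two perfect matchings. Since $G$ is factor critical, pick a perfect matching $M_1$ of $G-u$ and a perfect matching $M_2$ of $G-v$. In $M_1$ the vertex $u$ is exposed and $v$ is matched, while in $M_2$ the roles are reversed.

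I would then consider the symmetric difference $M_1 \triangle M_2$, whose connected components are alternating paths and even cycles. The only vertices of degree one in this subgraph are those covered by exactly one of $M_1, M_2$, which are precisely $u$ and $v$. Consequently the symmetric difference has exactly one alternating path, and its endpoints are $u$ and $v$. The path starts at $u$ with an $M_1$-edge (since $u$ is matched only by $M_1$) and ends at $v$ with an $M_2$-edge (by the symmetric reason), so by alternation its length is even. Adding the edge $uv$ therefore closes an odd cycle $C$ containing $uv$.

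For niceness, I would exhibit a perfect matching of $G - V(C)$ by restricting $M_1$ to $V \setminus V(C)$. To check this works, note that $u$ is matched by $M_1$ to its unique neighbour on the path (hence inside $V(C)$), every internal path vertex is matched by $M_1$ to one of its two path neighbours (also inside $V(C)$), and $v$ is unmatched by $M_1$. Hence no $M_1$-edge crosses the cut between $V(C)$ and its complement, and $M_1$ covers every vertex other than $v$; the restriction is therefore a perfect matching of $G - V(C)$, witnessing that $C$ is a nice odd cycle through $uv$.

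The main point requiring care is the parity argument showing that $u$ and $v$ appear as the two endpoints of a single alternating path of even length, since both the oddness of $C$ and the non-crossing property used for niceness depend on it. Once this structural observation is in place, the rest of the argument is routine bookkeeping of the edges of $M_1$ under the symmetric difference, and the same template (switching the roles of $M_1$ and $M_2$) can be reused later when we need multiple nice odd cycles in our analysis.
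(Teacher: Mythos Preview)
Your approach is exactly the paper's: take near-perfect matchings of $G-u$ and $G-v$, locate the even alternating $u$--$v$ path in their symmetric difference, and close it with $uv$; you additionally spell out the niceness check that the paper's sketch leaves implicit. One labeling slip to fix: since $M_1$ is the matching of $G-u$, it is $u$ (not $v$) that is $M_1$-exposed, so the first edge at $u$ is an $M_2$-edge and in your last paragraph the roles of $u$ and $v$ are interchanged, though the conclusion that the restriction of $M_1$ to $V\setminus V(C)$ is a perfect matching of $G-V(C)$ is unaffected.
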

\begin{proof}
For an edge $e=(i,j)$, find a p.m in $G-\{i\}$ and call it $M_i$ and find a p.m $M_j$ in $G-\{j\}$. There is an alternating path $P$ (edges of the path alternating between the edges of $M_i$ and $M_j$) between $i$ and $j$ in $M_i\cup M_j$ that is necessarily even. Now, $P+(i,j)$ gives us the desired nice odd cycle.
\end{proof}

Let $G$ be a graph and $G'$ be its subgraph. Then an \textbf{ear} of $G$ relative to $G'$ is a path with both end nodes (but none of the interior nodes) in $G'$. A ear is \textbf{proper} if the endpoints are distinct. If the ear is a cycle, then it is not proper. The ear is an \textbf{odd ear}, if it has an odd length. A ear decomposition of $G$ starting from its subgraph $G[U']$ is its representation given by $G=G'+P_1+P_2+\dots+P_k$, for some $k$. $P_i$ is an ear of $G'+P_1+\dots+P_i$ relative to graph $G'+P_1+\dots+P_{i-1}, i\le k$. A proper odd ear decomposition is one in which every ear is odd and proper.

\begin{thm}{\cite{LoPl:86}}\label{thm:fcNC}
Let $G$ be a graph and $C$ be a nice odd cycle of $G$. Then $G$ is factor critical if and only if there is an odd ear decomposition of $G$ starting from $C$, $G=C+P_1+P_2+\dots+P_k$. Moreover, $G=C+P_1+P_2+\dots+P_i$, for all $i\le k$, is a nice factor critical subgraph of $G$.
\end{thm}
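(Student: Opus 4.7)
The statement is an equivalence together with a structural ``moreover'' claim, and both directions go by induction. I would prove the easier sufficiency direction ($\Leftarrow$) first, by induction on the number of ears, and then re-use its inductive step as a building block in a greedy construction for the harder necessity direction ($\Rightarrow$).

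\textbf{Sufficiency.} I induct on $i$ to show $G_i := C + P_1 + \cdots + P_i$ is a nice factor-critical subgraph of $G$. The base $i=0$ is immediate: any odd cycle is factor-critical (removing any vertex leaves an even path with a canonical PM), and $C$ is nice in $G$ by hypothesis. For the inductive step, let $P_{i+1}$ be a proper odd ear with endpoints $u,w \in V(G_i)$ and $2k$ interior vertices $x_1,\dots,x_{2k}$ labelled along the ear. Given any $y \in V(G_{i+1})$, I build a PM of $G_{i+1} - y$ as follows: if $y \in V(G_i)$, extend a PM of $G_i - y$ (inductive hypothesis) by the canonical pairing $(x_1 x_2)(x_3 x_4)\cdots(x_{2k-1} x_{2k})$ of the interior of the ear; if $y = x_j$ is interior to $P_{i+1}$, then $P_{i+1} - y$ splits into two subpaths whose vertex counts have opposite parities, so the ear edges naturally pair every remaining interior vertex together with exactly one of $u,w$, leaving the other endpoint $z$ uncovered, and a PM of $G_i - z$ (inductive hypothesis) completes the matching. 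Niceness of $G_i$ inside $G$ holds because $G - V(G_i)$ is precisely the union of the interiors of $P_{i+1},\dots,P_k$, each of even cardinality and matchable in consecutive pairs within its own ear.

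\textbf{Necessity.} Starting from $G_0 := C$, I grow $G_i$ one proper odd ear at a time, maintaining the invariant that $G_i$ is a nice factor-critical subgraph of $G$ containing $C$. The process terminates with some $G_k = G$ because each step strictly enlarges $V(G_i)$ or $E(G_i)$. To find the next ear at a stage with $G_i \subsetneq G$, let $M$ be a PM of $G - V(G_i)$ supplied by niceness. If some edge $e \in E(G) \setminus E(G_i)$ has both endpoints in $V(G_i)$, take $P_{i+1} := e$, a length-one proper odd ear, so that $V(G_{i+1}) = V(G_i)$ and niceness persists via the same $M$. Otherwise, connectedness of $G$ (every factor-critical graph is connected) produces an edge $(u,b)$ with $u \in V(G_i)$ and $b \notin V(G_i)$, and factor-criticality of $G$ supplies a PM $N$ of $G - u$. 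I take $P_{i+1}$ to be a shortest $M$-augmenting path in $G$ that starts at $u$ through the edge $(u,b)$: its endpoints are $M$-uncovered, hence in $V(G_i)$; its length is odd; shortestness forces its interior to be $M$-covered, hence in $V(G) \setminus V(G_i)$; and the other endpoint differs from $u$ because $u$ is itself $M$-uncovered. Factor-criticality of $G_{i+1}$ is immediate from the sufficiency step, while niceness follows from the observation that the $M$-edges along the interior of $P_{i+1}$ already pair those interior vertices in matched couples, so $M$ with those edges removed is a PM of $G - V(G_{i+1})$.

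\textbf{Main obstacle.} The technical heart of the proof is producing the shortest $M$-augmenting path from $u$ that actually begins through the edge $(u,b)$ into $V(G) \setminus V(G_i)$, so that the resulting ear is genuinely new (not reusing edges of $E(G_i)$) and has its interior entirely outside $V(G_i)$. Existence of some $M$-augmenting object linking $u$ to $V(G_i) \setminus \{u\}$ reduces to Berge-type manipulation of the symmetric difference $M \triangle N$, which decomposes into alternating paths and cycles whose structure one can read off using the degree bookkeeping at $V(G_i)$-vertices versus $V(G) \setminus V(G_i)$-vertices. The delicate points are (a) ruling out the degenerate case where $M$ and $N$ share the edge incident to $b$, which is handled by re-routing through a neighbouring alternating path or by re-choosing $u$ among the candidates guaranteed by connectedness of $G$, and (b) enforcing oddness and properness of the final ear. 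These are the steps I expect to require the most careful bookkeeping; everything else is standard ear-decomposition arithmetic and factor-criticality manipulation.
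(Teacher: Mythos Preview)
The paper does not prove this theorem at all; it is quoted from Lov\'asz--Plummer with a citation and used as a black box, so there is no ``paper's proof'' to compare against. Your sufficiency direction is essentially the standard argument and is fine, modulo one oversight: the theorem as stated concerns odd ear decompositions, not \emph{proper} odd ear decompositions (that is Theorem~\ref{thm:fc2}), so you must also handle the case where $P_{i+1}$ is an odd cycle attached at a single vertex $u=w$. The same parity bookkeeping goes through, but you should say so.

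The necessity direction, however, has a concrete gap. With your choice $N$ a perfect matching of $G-u$, the vertex $u$ is covered by neither $M$ nor $N$ and is therefore \emph{isolated} in $M\triangle N$; the alternating paths you extract from $M\triangle N$ run between pairs of vertices in $V(G_i)\setminus\{u\}$, and nothing forces any of them to leave $V(G_i)$ or to pass through $b$. In particular you cannot produce an $M$-augmenting path ``starting at $u$ through the edge $(u,b)$'' from this symmetric difference, and your suggested patches (re-routing, re-choosing $u$) do not repair this. The clean fix---and this is exactly how Lov\'asz--Plummer argue---is to take $N$ to be a perfect matching of $G-b$ instead. Then $b$ is $M$-covered but $N$-exposed, hence has degree~$1$ in $M\triangle N$; the path component of $M\triangle N$ containing $b$ starts with the $M$-edge at $b$, has all internal vertices of degree~$2$ (hence in $V(G)\setminus V(G_i)$), and terminates at some $w\in V(G_i)$. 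Prepending the edge $(u,b)$ to this path yields the desired odd ear: proper if $w\neq u$, and an odd cycle through $u$ (an improper ear, which the theorem allows) if $w=u$. Niceness of $G_{i+1}$ then follows exactly as you say, by deleting from $M$ the $M$-edges along the ear.
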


We will also be making use of the the following obvious fact: Any ear decomposition of a nice factor critical subgraph of $G$ extends to an ear decomposition of $G$~\cite{LoPl:86}.

\begin{thm}{\cite{LoPl:86}}\label{thm:fc2}
Let $G$ be a factor critical and 2-connected graph of $G$. Then $G$ has a proper odd ear decomposition $G = C+P_1+P_2+\dots+P_k$ for some $k$ and $C$ is some nice odd cycle of $G$. Moreover, $G_i = C+P_1+P_2+\dots+P_i, i\le k$ is a factor critical, 2-connected nice subgraph of $G$.
\end{thm}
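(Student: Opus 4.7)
My plan is to build the decomposition inductively, maintaining at each stage a subgraph $G_i = C + P_1 + \cdots + P_i$ whose ears $P_1, \dots, P_i$ are all proper and odd and which is simultaneously 2-connected, factor critical, and a nice subgraph of $G$. Lemma~\ref{lem:oc} supplies a nice odd cycle $C$ of $G$ to play the role of $G_0$: this $C$ is trivially 2-connected, factor critical, and nice, and has no ears to worry about. For the inductive step, assume $G_i \subsetneq G$ satisfies the invariant. The extension fact quoted just before this theorem, together with Theorem~\ref{thm:fcNC}, lets me extend the current ear decomposition to an odd ear decomposition $G = G_i + R_1 + R_2 + \cdots + R_\ell$ in which every prefix is already nice and factor critical. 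The step therefore reduces to producing a proper odd ear $P_{i+1}$ to play the role of $R_1$, because adding a proper ear to a 2-connected graph preserves 2-connectedness.

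If $R_1$ is already proper, set $P_{i+1} := R_1$ and continue. Otherwise $R_1 = v, x_1, \dots, x_p, v$ is a closed ear of odd length $p+1$ at a vertex $v \in V(G_i)$, and $v$ becomes a cut vertex of $G_i + R_1$ separating $A := \{x_1, \dots, x_p\}$ from $B := V(G_i) \setminus \{v\}$. Since $G$ is 2-connected, there is a smallest index $j^*$ for which $v$ fails to be a cut vertex of $G_i + R_1 + \cdots + R_{j^*}$. Each ear among $R_2, \dots, R_{j^*-1}$ sits entirely on one side of the $v$-cut and may be permuted freely, so I may assume $R_2$ itself is the bridging ear, with endpoints $a$ on the $A$-side and $b$ on the $B$-side, neither equal to $v$. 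After dragging $a$ back to a vertex $x_r$ of the original $R_1$ along the $A$-side, I reduce to the case $a = x_r$.

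Splitting $R_1$ at $x_r$ into the two subpaths $R_1^{(1)} = v, x_1, \dots, x_r$ and $R_1^{(2)} = x_r, \dots, x_p, v$, their lengths sum to the odd number $p+1$, so exactly one of them has even length. Concatenating $R_2$ with that even piece produces a new proper ear $R_1'$ from $b$ to $v$ of odd total length, and the remaining odd piece becomes a new proper ear $R_2'$ from $x_r$ to $v$. Both replacement ears are proper and odd, together they cover exactly the edges of $R_1$ and $R_2$, and $x_r$ is an interior vertex of $R_1'$, so the endpoints of $R_2'$ lie in $V(G_i + R_1')$. Consequently the later ears $R_3, \dots, R_\ell$ remain legal in the reshuffled decomposition, which is still an odd ear decomposition of $G$ starting from the nice odd cycle $C$. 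Setting $P_{i+1} := R_1'$ closes the induction step: adding the proper ear $R_1'$ keeps $G_{i+1}$ 2-connected, and being nice and factor critical is inherited from the new ear decomposition via Theorem~\ref{thm:fcNC}.

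The main obstacle is the bookkeeping inside this rearrangement: establishing existence of the bridging ear $R_{j^*}$ from the 2-connectedness of $G$ (a Menger-style argument), handling the case in which $a$ does not already lie on $R_1$ by choosing an $A$-side subpath whose parity is compatible with the odd-ear requirement on both replacement ears, and verifying that swapping $(R_1, R_2)$ for $(R_1', R_2')$ leaves every subsequent ear legal. Once these points are in place, iterating the step yields the required proper odd ear decomposition $G = C + P_1 + \cdots + P_k$, and the ``moreover'' clause is built into the induction invariant.
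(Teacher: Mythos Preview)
The paper does not give its own proof of this statement: Theorem~\ref{thm:fc2} is quoted from Lov\'asz--Plummer~\cite{LoPl:86} as background and used only as a black box in the proof of Theorem~\ref{thm:main}. So there is nothing in the paper to compare against; what follows is an assessment of your sketch on its own merits.

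Your overall strategy---produce an odd ear decomposition from Theorem~\ref{thm:fcNC} and then repair an improper first ear using the 2-connectedness of $G$---is sound and is essentially the classical argument. The gap is in the repair step. The claim that the ears $R_2,\dots,R_{j^*-1}$ ``may be permuted freely, so I may assume $R_2$ itself is the bridging ear'' is not correct as stated: you can reorder so that all $A$-side ears precede all $B$-side ears (respecting dependencies within each side), and this does force the bridging ear's $B$-side endpoint $b$ into $V(G_i)\setminus\{v\}$, but its $A$-side endpoint $a$ may still sit on an intermediate $A$-side ear rather than on $R_1$. Your ``dragging $a$ back to $x_r$'' acknowledges this, but you give no mechanism for controlling the parity of the dragging path, and---more seriously---you never address why $G_{i+1}=G_i+P_{i+1}$ remains a \emph{nice} subgraph of $G$: the $A$-side vertices that $P_{i+1}$ skips over must admit a perfect matching among themselves, and nothing in your swap $(R_1,R_2)\mapsto(R_1',R_2')$ guarantees that.

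Both defects are repaired by the same device. The $A$-side graph $H:=R_1+(\text{$A$-side ears among }R_2,\dots,R_{j^*-1})$ is itself factor critical (an odd cycle plus odd ears), so near-perfect matchings $M_a$ of $H-a$ and $M_v$ of $H-v$ yield an even $a$--$v$ alternating path $Q$ in $M_a\cup M_v$, and $M_v$ restricted to $V(H)\setminus V(Q)$ perfectly matches the skipped $A$-side vertices. Taking $P_{i+1}:=R_{j^*}\cup Q$ gives a proper odd ear from $b$ to $v$ with interior disjoint from $V(G_i)$; combining the matching on $V(H)\setminus V(Q)$ with the one certifying that $G_i+R_1+\cdots+R_{j^*}$ is nice shows $G_{i+1}$ is nice. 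With this correction your induction closes.
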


\section{Matching polytope and Padberg's conjecture}
 
For a graph $G(V,E)$, let $\delta(v)$ be the set of edges incident on a node $v\in V$ in $G$ and let $E[U]$ be the edge set of the node induced subgraph $G[U]$ for some subset of nodes $U\subset V$. The facets of the matching polytope for graph $G=(V,E)$ is given by the following set of inequalities.
\begin{itemize}
\item[(i)] $x_e \ge 0, \forall e \in E$
\item[(ii)] $\sum_{e\in \delta(v)]}x_e \le 1, \forall v\in V, \delta(v) \ge 3$ or $\delta(v) = 2$ and not a part of triangle
\item[(iii)] $\sum_{e\in E[U]}x_e \le \floor[\Big]{\frac{1}{2}|U|}, \forall U \subseteq V, G[U]$ is factor critical and 2-node connected
\end{itemize}

The variable $x_e$ takes a value of 1 if edge $e$ is picked in the matching and takes 0 otherwise. The matching polytope is full dimensional. The reader could refer to~\cite{Sc:03} for more details on facets and dimension of the matching polytope.

All efforts in this work is dedicated to prove the following theorem that was stated as a conjecture by Padberg\cite{Pa:13} about the matching polytope.
\begin{thm}~\label{thm:main}
The geometric rank of the matching polytope is less than or equal to 1.
\end{thm}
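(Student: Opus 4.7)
My plan has two main steps: first identify the rank-$0$ facets, and then show that every remaining facet shares a ridge with some rank-$0$ facet.

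\emph{Step 1: rank-$0$ facets.} I would show that the system consisting of (i) $x_e \ge 0$ for every $e \in E$, (ii) $\sum_{e \in \delta(v)} x_e \le 1$ at every vertex $v$ where this inequality is facet-defining, and (iii) the triangle inequality $\sum_{e \in E[T]} x_e \le 1$ for every triangle $T$ that has a vertex of degree $2$ in $G$, constitutes a minimal formulation of $P$. The integer points of this LP are exactly the matchings of $G$, and removing any single inequality admits an integer non-matching (a negative coordinate, a vertex covered by two edges, or an overused triangle, respectively). Hence all these inequalities are rank $0$.

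\emph{Step 2: odd-set facets share a ridge with a degree facet.} Fix an odd-set facet $(f, f_0)$ coming from $G[U]$ factor-critical and $2$-connected with $|U| \ge 5$. I would pair it with a degree facet $F_v$ at a vertex $v \in U$ such that $F_v$ is rank $0$, and show that $\dim(F_f \cap F_v) = |E| - 2$. By Theorem~\ref{thm:fc2}, $G[U]$ admits a proper odd ear decomposition $G[U] = C + P_1 + \cdots + P_k$: if $k \ge 1$, the endpoints of any ear have degree $\ge 3$ in $G[U]$ (hence in $G$), so $F_v$ is facet-defining at such $v$; otherwise $G[U] = C$ is an odd cycle of length $\ge 5$, each of whose vertices has degree $2$ without lying in a triangle of $G[U]$, so $F_v$ is facet-defining at every vertex of $C$ (and if $G$ adds an extra triangle through $v$, I would switch to a non-negativity facet as the rank-$0$ partner).

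\emph{Step 3: dimension count.} To prove $\dim(F_f \cap F_v) = |E|-2$, I would exhibit $|E|-1$ affinely independent matchings of $G$ in $F_f \cap F_v$. The construction proceeds in three stages: (a) for each $u \in U \setminus \{v\}$, factor-criticality of $G[U]$ supplies a near-perfect matching $M_u$ of $G[U]$ missing exactly $u$ and so covering $v$; (b) each ear $P_i$ of the decomposition admits two alternating matchings covering its interior, and the toggle between them, applied on top of a base matching, yields an additional near-p.m.\ of $G[U]$ introducing a new coordinate direction in $\mathbb{R}^{E[U]}$; (c) for each edge $e \in E \setminus E[U]$, start from a stage-(a) matching whose missed vertex $u$ is compatible with $e$ and augment with $e$ (together with a reference matching of $G - (U \setminus \{u\})$), so $e$'s coordinate becomes nonzero. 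The $(|U|-1)$ matchings from (a) and the $k$ from (b) together span $|E[U]|-2 = |U|+k-2$ dimensions within $\mathbb{R}^{E[U]}$ modulo the two active equalities, while (c) adds $|E \setminus E[U]|$ more dimensions in $\mathbb{R}^{E \setminus E[U]}$, giving $|E|-2$ in total.

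\emph{Main obstacle.} The crux is the affine-independence argument in Step 3. I envision an induction on the ear decomposition of $G[U]$: the base case is the nice odd cycle $C$, where the $|C|-1$ near-p.m.'s of $C$ covering $v$ are shown by direct computation to span $|E(C)|-2$ dimensions; each subsequent ear $P_i$ of length $2m_i+1$ extends $|U|$ by $2m_i$ (contributing $2m_i$ new stage-(a) matchings) and $|E[U]|$ by $2m_i+1$ (the extra dimension coming from the stage-(b) toggle). A secondary subtlety is the stage-(c) compatibility: for a boundary edge $e=(u,w)$ with $u \in U$ and $w \in V \setminus U$, the matching must have $u$ as its missed vertex, which conflicts if $u=v$; in that case $v$ must be matched by an outside edge, and I would exhibit the required matching using a near-p.m.\ of $G[U]$ missing $v$ together with an edge $(v,w) \in \delta(U)$, falling back to a non-negativity rank-$0$ partner whenever this extension is not available.
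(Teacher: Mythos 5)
Your Steps 1 and 2 coincide with the paper's own structure: the same minimal formulation as Lemma~\ref{lem:rank}, and the same choice of rank-$0$ partner (the degree constraint at an ear endpoint $v$, with the chordless odd cycle treated separately). Where you genuinely diverge is the dimension count. The paper argues indirectly: it shows that no inequality of the system other than the two chosen ones is implicitly tight on $P^2$, by exhibiting for each remaining inequality a matching in $P^2$ on which that inequality is slack. You propose the direct method of exhibiting $|E|-1$ affinely independent matchings in $F_f\cap F_v$. That route is viable and, if completed, would subsume the paper's entire case analysis in one stroke, since affine independence automatically forbids the face from lying in any third hyperplane.

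The gap is that Step 3, which carries essentially all of the technical content, is only envisioned, and the mechanism you describe has a concrete defect. The ``toggle'' between the two alternating matchings of an ear $P_i$ is not a local exchange: one of them covers only the $2m_i$ interior vertices (using $m_i$ edges) while the other also covers both endpoints (using $m_i+1$ edges), so performing the swap inside a base near-p.m.\ forces you to re-match or expose the former partners of the two endpoints. Producing from this a near-p.m.\ of $G[U]$ that still covers $v$ by an edge of $E[U]$, while keeping the incidence matrix triangular across all ears, is exactly where the difficulty lives; it is the analogue of the paper's Cases 1b and 3c (an edge of $E[U]$ whose only nice odd cycle is a triangle through $v$, and a nested nice factor-critical subgraph $G[U']\subset G[U]$), which occupy most of its proof. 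Your affine-independence claim implicitly asserts that both situations are handled --- if all $|E|-1$ matchings avoided some edge $e$, or were all tight on the $U'$ inequality, they would lie in a third independent hyperplane and could not be affinely independent --- but the sketched induction gives no indication of how the ``$v$ stays covered'' side condition is threaded through the base case and the ear steps. Until that is written down, this is a plan rather than a proof. (Two smaller points: in the chordless odd cycle case every vertex's degree inequality is automatically facet-defining, since a triangle through a degree-$2$ vertex of the cycle would force a chord, so no fallback to a non-negativity partner is needed; and in stage (c) the matching $M_u\cup\{e\}$ with no further outside edges suffices, which keeps the outside block of your matrix an identity.)
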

\textit{Note: A cycle with four nodes has a rank 0, every facet is required in the minimal formulation}

We first need the following result to prove theorem~\ref{thm:main}.

\begin{lem}\label{lem:rank}
Facets induced by inequalities of type (i),  (ii) and the triangles corresponding to inequality (iii) with a node of degree 2, together constitute a minimal formulation and hence has a rank 0.
\end{lem}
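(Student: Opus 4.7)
The plan is to verify both conditions of the minimal-formulation definition for the family $\mathcal{F}_{\min}$ consisting of the non-negativity inequalities of type~(i), the vertex-degree inequalities of type~(ii), and the triangle inequalities of type~(iii) whose triangle carries at least one vertex of degree $2$; write $P_{\min}$ for the corresponding LP relaxation.

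For condition~(i), I would show $P_{\min}\cap\mathbb{Z}^n$ equals the set of matching indicator vectors, from which $conv(P_{\min}\cap\mathbb{Z}^n)=P$ follows. Matchings obviously lie in $P_{\min}$; conversely, for an integer $x\in P_{\min}$, at any vertex $v$ carrying a type~(ii) constraint non-negativity and integrality force at most one edge of $\delta(v)$ to take value $1$, and at a degree-$2$ vertex $v$ inside a triangle $\{u,v,w\}$ the type~(iii) inequality yields $x_{uv}+x_{vw}\le 1-x_{uw}\le 1$, giving the same conclusion. Hence the support of $x$ is a matching.

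For condition~(ii), I must exhibit, for every $h\in\mathcal{F}_{\min}$, an integer $x\in P_{\min}^h\setminus P$, and I split by the type of $h$. When $h$ is $x_e\ge 0$, take $x_e=-1$ and zero elsewhere; every other inequality has a non-negative coefficient on $x_e$ and is therefore only slackened, while $x\notin P$ because the matching polytope is non-negative. When $h$ is the triangle inequality for $\{u,v,w\}$ with $v$ of degree $2$, set $x_{uv}=x_{vw}=1$ and zero elsewhere; the left-hand side of $h$ becomes $2$, and since $N(v)=\{u,w\}$, $\{u,v,w\}$ is the only triangle containing $v$, so no other triangle inequality in $\mathcal{F}_{\min}$ is touched, while the degree inequalities at $u$ and $w$, if present, take value exactly $1$. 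When $h$ is the degree inequality at a vertex $v$, pick $a,b\in N(v)$ and set $x_{va}=x_{vb}=1$, zero elsewhere; the sum at $v$ equals $2$, and provided the triangle at $\{v,a,b\}$ is not active in $\mathcal{F}_{\min}$ no other constraint is violated.

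The delicate step is ensuring a suitable choice of $a,b$ in the last case. The resolution is a small graph-theoretic case analysis. If $v$ has degree $2$, the type~(ii) hypothesis forces $v$ not to lie in any triangle, so $ab\notin E$ automatically. If $v$ has degree at least $3$, then either some pair $a,b\in N(v)$ is non-adjacent and I choose such a pair, or else $\{v\}\cup N(v)$ induces a clique of size at least $4$, in which case every neighbor of $v$ has degree at least $3$ in $G$, so every triangle through $v$ consists only of vertices of degree at least $3$ and does not appear in $\mathcal{F}_{\min}$. I expect this clique subcase to be the main obstacle in the write-up; once it is settled the remaining checks are direct local verifications, and we conclude that $\mathcal{F}_{\min}$ is a minimal formulation and hence every constraint in it has rank $0$.
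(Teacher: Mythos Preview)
Your argument is correct and follows the same approach as the paper: verify that the integer points of $P_{\min}$ are exactly the matchings, and then for each inequality in $\mathcal{F}_{\min}$ exhibit an integer point that violates it while respecting the others (namely $x_e=-1$ for type~(i), two incident edges at $v$ for type~(ii), and the two edges at the degree-$2$ vertex for the triangle case). Your treatment is in fact more careful than the paper's, which simply asserts ``we can pick any two edges incident on the same node without type~(ii)'' without checking that this does not collide with a triangle inequality in $\mathcal{F}_{\min}$; your case split (non-adjacent pair in $N(v)$ versus $\{v\}\cup N(v)$ a clique of size $\ge 4$) cleanly fills that gap.
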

\begin{proof}
In the presence of the integrality constraint, we are restricted to binary values and clearly every valid matching $\tilde{x}$ is satisfied by these inequalities. In order to show it is minimal, we need to argue why none of them could be removed from the formulation.
We can pick negative values without type (i). We can pick any two edges incident on the same node without type (ii). We can pick the two edges incident on the node of degree 2 in the triangle corresponding to the type (iii) inequality, without this inequality. 
\end{proof}


\begin{proof}[Proof of the Theorem~\ref{thm:main}]
We will prove the claim by showing that every type (iii) inequality (not corresponding to a triangle with a node of degree 2) induces a facet to the facet induced by some inequality of type (ii). This along with lemma~\ref{lem:rank} will prove the claim.
We know for a fact that a component $U$ that is involved in type (iii) inequality is factor critical and 2-node connected. 
So it has proper odd ear decomposition. We will pick a node to which an ear is connected. Let us call it $v$. The degree inequality corresponding to this node would serve as our type (ii) inequality candidate. Since an ear is connected to $v$, it has a degree of at least 3 in $G[U]$. If we don't find such a node, then $G[U]$ is an odd cycle without any chords (odd hole) and it is easy to show that it induces a facet to the facet induced by the degree constraints (a type (ii) inequality) of any of the nodes in the hole (we will deal with this case separately). Now let us consider the two inequalities corresponding to component $U$ and node $v$.
\begin{align}
\sum_{e\in E[U]}x_e &\le \floor[\Big]{\frac{1}{2}|U|}\label{eq:tp1}\\
\sum_{e\in \delta(v)}x_e &\le 1\label{eq:tp2}
\end{align}
Let 
\[P^2 = \{x\in \mathbb{R}^{|E|}: Ax \le b, \sum_{e\in E[u]}x_e = \floor[\Big]{\frac{1}{2}|U|}, \sum_{e\in \delta(v)]}x_e = 1\},\]
where $Ax\le b$ is the system with all the inequalities (i), (ii) and (iii) except~\ref{eq:tp1}~and~\ref{eq:tp2}. If $dim(P^2) < dim(P) -2$, then we know that there is some inequality in $Ax \le b$ that is implicitly satisfied at equality. For every inequality $\alpha x\le \beta$ in the system $Ax\le b$, we will show that there exists a valid matching $\tilde{x}$ satisfying~\ref{eq:tp1}~and~\ref{eq:tp2} at equality and we have $\alpha\tilde{x}< \beta$ true. In other words, we would have shown that none of the inequalities in the system $Ax\le b$ is implicitly satisfied at equality in $P^2$.\\

\textbf{Case 1:} Type (i): $x_e \ge 0$:\\

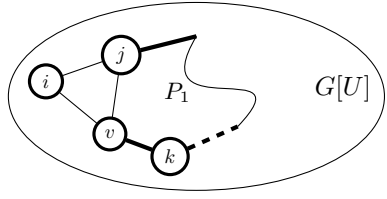
\begin{figure}
\begin{center}
\begin{tikzpicture} [>=stealth]

	\node (i) at (-1,0.65)   [scale=0.75,circle,draw,very thick] {$i$};	
	\node(j) at (0,1)   [scale=0.75,circle,draw,very thick] {$j$};
	\node (v) at (-0.15,-0.05)   [scale=0.75,circle,draw,very thick] {$v$};
	\node (k) at (0.65,-0.35)   [scale=0.75,circle,draw,very thick] {$k$};
	\draw (1,0.45) ellipse (2.5cm and 1.25cm);
	\draw(i)--(j);
	\draw(v)--(j);
	\draw(i)--(v);
	\draw[ultra thick](j)--(1,1.25);
	\draw[ultra thick](k)--(v);
	\draw[ultra thick, dashed](k)--(1.55,0.05);
	\draw (1,1.25) .. controls (0.25,0) and (2.5,1.05) .. (1.55,0.05);
	\node (p1) at (0.75,0.5) [draw=none,fill=none] {\small{\text{$P_1$}}}; 
	\node (Gu) at (2.95,0.55) [draw=none,fill=none] {{\text{$G[U]$}}}; 
\end{tikzpicture}

\caption{Cycle $C$ is a triangle with nodes $i,j, v$}\label{fig:case1b}
\end{center}
\end{figure}

\textit{Case 1a:} $e\notin E[U]$. It is easy to see that we can set this edge to a value 1, to get a valid matching that satisfies both inequalities~\ref{eq:tp1}~and~\ref{eq:tp2} at equality. If one of the endpoints is in $U$, we work with the p.m obtained by deleting this endpoint. This allows us to pick this edge into the matching.\\

\textit{Case 1b:} $e=(i,j) \in E[U]$. We know from lemma~\ref{lem:oc} that every edge is present in a nice odd cycle in $G[U]$. We can get the perfect matching without this cycle and within this odd cycle we can get a near perfect matching that picks this edge. We need to deal with the case if the nice odd cycle is a triangle with nodes, $i,j$ and $v$. As this would force us to delete $v$ in the near perfect matching and we might not be able to satisfy (2) at equality. In order to handle this case, we get an odd ear decomposition starting from $C$, $G[U]=C+P_1+P_2+\dots+P_k$, similar to the one provided in~\cite{LoPl:86}. The first ear we construct contains the edge $(v,k)$, where  $k\in G[U] - C$. The first ear $P_1$ is constructed from $M_v\cup M_k$. This is an alternating path connecting  $v$ and $k$ and we add edge $(v,k)$ to it to obtain the first ear $P_1$. Note that $C+P_1$ is a nice subgraph of $G$. If $P_1$ is a proper ear with endpoints $v$ and say $j$ (see figure~\ref{fig:case1b}), then the $P_1+j\text{ --- }i\text{ --- }v$ forms another nice odd cycle of $G$ and we reduce it to the previous case. If $P_1$ is not a proper ear, then it is a odd cycle containing $v$. We just get near p.m in $P_1$ that contains $v$ and pick edge $(i,j)$ and the p.m in $G-C-P_1$. In all cases, we got a near perfect matching that has an edge incident to $v$ and the edge $(i,j)$ was picked.\\


\textbf{Case 2:} Type (ii): For some vertex $v'\ne v, \sum_{e\in \delta(v')}x_e \le 1$:\\
This is a relatively simple case to show. If $v'\in U$, we will just delete this node and get a matching in $G[U]$, and by not including any of the other edges incident to $v'$, we get a matching that doesn't satisfy this at equality. The case, $v'\notin U$, is easy to see.\\

\textbf{Case 3:} Type (iii): For some vertex set $U'\ne U, \sum_{e\in E[U']}x_e \le \floor[\Big]{\frac{1}{2}|U'|}$:\\
If we can show that there is a matching $\tilde{x}$ satisfying~\ref{eq:tp1}~and~\ref{eq:tp2} at equality with at least two nodes in $U'$ not matched with nodes in $U'$ (either unmatched or matched with nodes in $V-U'$) and since $U'$ is odd, we would have that $\sum_{e\in E[U']}\tilde{x}_e < \floor[\Big]{\frac{1}{2}|U'|}$.\\

\textit{Case 3a:} Either $|U'| \ge |U|$ or  $U' \cap U = \emptyset$. Deleting any node other than $v$ in $U$ to get a near p.m in $G[U]$ would yield the desired matching for this case. As there are at least two nodes in $U'$ (both $U'$ and $U$ are odd) that is not matched.\\

\textit{Case 3b:} $U' \cap U \ne \emptyset, U' \not\subset U$. We now delete a node $w\in U' \cap U\backslash\{v\}$ and get a near p.m for $G[U]$. So we know that we have a matching that does not match $w\in U'$ and some other node in $w'\in U'\backslash U$. We can just choose not to match $w'$ to obtain a matching, for which the inequality is not satisfied at equality. If $v$ is the only node in the intersection, then we delete some other node in $U$ and find a near p.m for $G[U]$. Node $v$ will be matched to some node in $U$. So we will have node $v$ and $w'$ in $U'$ not matched within $U'$ in this solution.\\

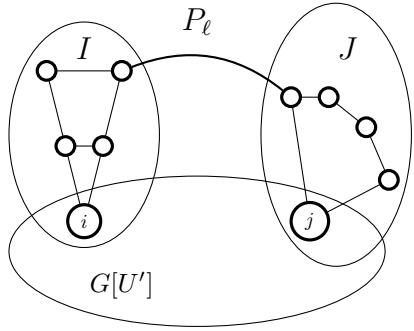
\begin{figure}
\begin{center}
        \begin{tikzpicture} [>=stealth]
	\node (i) at (-0.5,0.85)   [scale=0.75,circle,draw,very thick] {$i$};	
	\node (j) at (2.5,0.85)   [scale=0.75,circle,draw,very thick] {$j$};	
	\node (1) at (-0.75,1.85)   [scale=0.75,circle,draw,very thick] {};	
	\node (2) at (-0.25,1.85)   [scale=0.75,circle,draw,very thick] {};	
	\node (3) at (-1,2.85)   [scale=0.75,circle,draw,very thick] {};	
	\node (4) at (0,2.85)   [scale=0.75,circle,draw,very thick] {};	

	\node (5) at (2.25,2.5)   [scale=0.75,circle,draw,very thick] {};	
	\node (6) at (2.75,2.5)   [scale=0.75,circle,draw,very thick] {};	
	\node (7) at (3.25,2.1)   [scale=0.75,circle,draw,very thick] {};	
	\node (8) at (3.55,1.4)   [scale=0.75,circle,draw,very thick] {};

	\draw (1,0.45) ellipse (2.5cm and 1cm);
	\draw (-0.5,2) ellipse (1cm and 1.5cm);
	\draw (2.85,2) ellipse (1cm and 1.75cm);
	\draw (1)--(2);
	\draw (1)--(i);
	\draw (i)--(2);
	\draw (1)--(3);
	\draw (4)--(3);
	\draw (4)--(2);
	\draw (5)--(j);
	\draw (8)--(j);
	\draw (5)--(6);
	\draw (7)--(6);
	\draw (7)--(8);

	\draw[thick] (4) edge [bend left] (5);	
	
 	\node (0,-2)[draw=none,fill=none] {{\text{$G[U']$}}}; 
 	\node (pell) at  (1,3.5) [draw=none,fill=none] {\large\textbf{\text{$P_\ell$}}}; 
 	\node (I) at  (-0.5,3.15) [draw=none,fill=none] {\large\textbf{\text{$I$}}}; 
 	\node (J) at  (3,3.15) [draw=none,fill=none] {\large\textbf{\text{$J$}}}; 
	
%
%
		
        \end{tikzpicture}

\end{center}
\caption{Construction of graph $G_\ell$ from $G[U']$. Components $I$ and $J$ are attached to $G[U']$ with the last ear connecting $I$ and $J$ to obtain $G_\ell \subseteq G[U]$.}\label{fig:gellconst}
\end{figure}

\textit{Case 3c:} $U'\subset U$. 
It is easy to see the case when $G[U']$ is not a nice subgraph of $G[U]$. We claim that deleting any node $w$ in $U'$ and doing a matching on $U$ would result in at least two of the nodes in $U'$ to match with nodes in $U\backslash U'$. If every node of $U'\backslash \{w\}$, in our p.m. $M_w$, was matched with a node in $U'$, then every node in $U\backslash U'$ is matched with a node in $U\backslash U'$. This would mean $G[U']$ is a nice subgraph of $G[U]$. So at least one node in $U'\backslash\{w\}$ is matched with a node in $U\backslash U'$. Since $U'$ has an odd number of nodes, there should be one other node in $U'$ that must be matched outside $U'$ (to some node in $U\backslash U'$) in $M_w$.

For the case that $G[U']$ is a nice subgraph of $G[U]$, since $G[U']$ is a factor critical graph, we know that it has an odd ear decomposition. $G[U']=C+P_1+P_2+\dots+P_k$. Since we are dealing with the case that $G[U']$ is a nice factor critical subgraph of $G[U]$ the ear decomposition of $G[U']$ extends to $G[U]$~\cite{LoPl:86}. So we have, $G[U]=C+P_1+P_2+\dots+P_k+P'_1+P'_2+\dots+P'_{k'}$. We know that $G[U]$ is 2-connected. We also know that $G[U']$ does not span $G[U]$, i.e., $U'\backslash U$ is not empty. This implies that we have at least two nodes $i$ and $j$ in $G[U']$ that have distinct neighbours in $G[U]-G[U']$. Now as we are adding our ``extended'' ears ($P'_1,\dots, P'_{k'}$), we are growing the components $I$ and $J$ that are attached to $i$ and $j$ respectively. $I$ (resp. $J$) is the set of nodes that gets disconnected from $G[U]+I+J$ (see figure~\ref{fig:gellconst}) if we delete $i$ (resp. $j$). We stop at the point, where we add an ear and these two components get connected, and we call this $\ell$. We get a new graph $G_\ell = C+P_1+P_2+\dots+P_k+P'_1+P'_2+\dots+P'_\ell$ (see figure~\ref{fig:gellconst}), which is a nice factor critical subgraph of $G[U]$.
Note that there could be more than two nodes in $G[U]$ that have neighbours in $G[U]-G[U']$. We choose the first two nodes as $i$ and $j$ for which we could stop at $\ell$.  Let us say $P^i_1\dots P^i_{k_i}$ (resp. $P^j_1\dots P^j_{k_j}$) are the $k_i$ (resp. $k_j$) ears constituting component $I$ (resp. $J$) that is attached to $i$ (resp. $j$). We can assume that we haven't yet added the ears constituting the components attached to the nodes that are not $i$ and $j$ of $G[U']$. In other words, $G_\ell= G[U'] + P^i_1+\dots+P^i_{k_i}+P^j_1+\dots+P^j_{k_j}+P_\ell$.

If the ear $P_\ell$ is a single edge we refer to its endpoints as $i'$ and $j'$, else we pick the first two adjacent interior nodes of this ear ($i'$ is the closest node to component $I$) and refer to them as $i'$ and $j'$. We are now going to construct a nice odd cycle, $C$, for $G_\ell$, that contains this edge $(i', j')$ (see figure~\ref{fig:case3ca}). The construction is similar to the proof of lemma~\ref{lem:oc}. We get a p.m, $M_{i'}$ in $G_\ell-\{i'\}$ and $M_{j'}$ in $G_\ell-\{j'\}$ and add $(i',j')$ to the alternating path connecting $i'$ and $j'$ in $M_{i'}\cup M_{j'}$. A few key observations are in order. Note that we address the components attached to $i $ and $j$ as $I$ and $J$ before we added $P_\ell$. The number of nodes in both components $I$ and $J$ (respectively excluding $i$ and $j$) is even, since we only add odd ears. Now the near p.m $M_{j'}$ would force us to pick an edge $(i,t)$ for some $t\in I$, since $I\cup\{i'\}$ has an odd number of nodes (excluding $i$) and one node in $I\cup\{i'\}$ is forced to match with $i$.
By a similar argument $M_{i'}$ will pick an edge $(j,t')$ for some $t'\in J$. So the path $i\text{ --- }i'\text{ --- }j'\text{ --- }j$ in our constructed odd cycle is of odd length with at least two nodes. This is true even when $P_\ell$ is a single edge ear or when either $i$ and $i'$ or $j$ and $j'$ coincide by similar parity arguments. The other even path connecting $i$ and $j$ of our nice cycle will be entirely inside $G[U']$ due to the way we constructed $G_\ell$. It contains at least one node, other than $i$ and $j$,  because both $i$ and $j$ are matched to some nodes $k'$ and $k$ outside $G[U']$ in the matchings $M_{j'}$ and $M_{i'}$ respectively. In other words, either matchings would not have picked the edge $(i,j)$, even if it existed. Notice that, since $C$ is a nice subgraph of $G_\ell$ and $G_\ell$ is a nice subgraph of $G[U]$, $C$ is a nice subgraph of $G[U]$.
In the even path $i\text{ --- }j$, if $v$ is not the only node adjacent to $i$ and $j$ (like in figure~\ref{fig:case3cc}), then we delete the node, say $m$, adjacent to either $i$ or $j$ and $m\neq v$ and get a p.m in $C$ with $m$ deleted and a p.m for $G[U]-C$. The matching constructed would have $i,j$ and $m$ either unmatched or matched to nodes outside $G[U']$. If either $i$ or $j$ is node $v$, then it is easy to see that we could obtain a matching with at least two nodes in $U'$ matched outside $G[U']$ and an edge incident on $v$ being picked.

If $v$ is the only node adjacent to $i$ and $j$ (see figure~\ref{fig:case3cc}), we know from theorem~\ref{thm:fcNC} that $G[U]$ has a ear decomposition starting from $C$, $G[U]=C+P''_1+P''_2+\dots+P''_{k''}$.  We use the construction that picks the first ear that includes the edge $(v,r)$ for some $r\neq i,j$ (similar to case 1b). We know that such a node exist, since we assumed $v$ has a degree at least 3 in $G[U]$. Now, we know (again from theorem~\ref{thm:fcNC}) that $C+P''_1$ is a nice subgraph of $G$. So we get a p.m in $G[U]-C-P''_1$. The ear, $P''_1$, (whether proper or not) is still odd and has at least two new nodes (that are not in $C$). So we can delete one of the interior nodes to get a near p.m. with $v$ being matched to one of the interior nodes in $P''_1$ and both $i$ and $j$ being matched to $t\in I$ and $t'\in J$ respectively in the cycle $C$. Thus we have obtained a near p.m for $G[U]$ with both $i$ and $j$ not matched within $G[U']$ and one of the incident edges of $v$ picked in the matching.

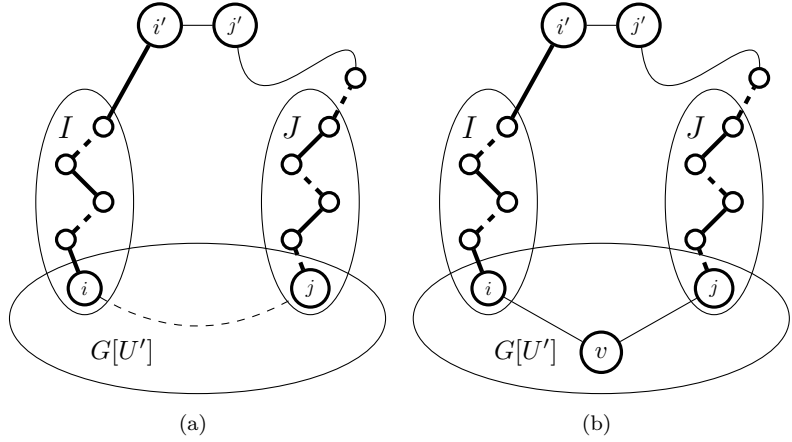
\begin{figure}
\begin{center}
         \subfigure[	]{
       \begin{tikzpicture} [>=stealth]
	\node (i) at (-0.5,0.85)   [scale=0.75,circle,draw,very thick] {$i$};	
	\node (j) at (2.5,0.85)   [scale=0.75,circle,draw,very thick] {$j$};	
	\node (i') at (0.5,4.35)   [scale=0.85,circle,draw,very thick] {$i'$};	
	\node (j') at (1.5,4.35)   [scale=0.75,circle,draw,very thick] {$j'$};	
	\node(1) at (-0.75,1.5) [scale=0.75,circle,draw,very thick] {};
	\node(2) at (-0.25,2) [scale=0.75,circle,draw,very thick] {};
	\node(3) at (-0.75,2.5) [scale=0.75,circle,draw,very thick] {};
	\node(4) at (-0.25,3) [scale=0.75,circle,draw,very thick] {};
	\node(5) at (2.25,1.5) [scale=0.75,circle,draw,very thick] {};
	\node(6) at (2.75,2) [scale=0.75,circle,draw,very thick] {};
	\node(7) at (2.25,2.5) [scale=0.75,circle,draw,very thick] {};
	\node(8) at (2.75,3) [scale=0.75,circle,draw,very thick] {};
	\node(9) at (3.1,3.65) [scale=0.75,circle,draw,very thick] {};

	\draw (1,0.45) ellipse (2.5cm and 1cm);
	\draw (-0.5,2) ellipse (0.65cm and 1.5cm);
	\draw (2.5,2) ellipse (0.65cm and 1.5cm);
	\draw (i')--(j');
 	\node (0,-2)[draw=none,fill=none] {\textbf{\text{$G[U']$}}}; 

	\draw[ultra thick](i) -- (1);
	\draw[ultra thick,dashed](1) -- (2);
	\draw[ultra thick](2) -- (3);
	\draw[ultra thick,dashed](3) -- (4);
	\draw[ultra thick](i') -- (4);
	\draw[ultra thick,dashed](j) -- (5);
	\draw[ultra thick](5) -- (6);
	\draw[ultra thick,dashed](6) -- (7);
	\draw[ultra thick](7) -- (8);
	\draw[ultra thick, dashed](8) -- (9);
 	\node (I) at (-0.75,3)[draw=none,fill=none] {\large\textbf{\text{$I$}}}; 
 	\node (J) at (2.25,3)[draw=none,fill=none] {\large\textbf{\text{$J$}}};	
	
    \draw (j') .. controls (1.7,2.667) and (3.134,4.667) .. (9);
	
	\draw[dashed] (i) edge [bend right] (j);			
        \end{tikzpicture}
        \label{fig:case3ca}
	}
	         \subfigure[]{
        \begin{tikzpicture} [>=stealth]
	\node (i) at (-0.5,0.85)   [scale=0.75,circle,draw,very thick] {$i$};	
	\node (j) at (2.5,0.85)   [scale=0.75,circle,draw,very thick] {$j$};	
	\node (i') at (0.5,4.35)   [scale=0.85,circle,draw,very thick] {$i'$};	
	\node (v) at (1,0)   [scale=0.95,circle,draw,very thick] {$v$};	
	\node (j') at (1.5,4.35)   [scale=0.75,circle,draw,very thick] {$j'$};	
	\node(1) at (-0.75,1.5) [scale=0.75,circle,draw,very thick] {};
	\node(2) at (-0.25,2) [scale=0.75,circle,draw,very thick] {};
	\node(3) at (-0.75,2.5) [scale=0.75,circle,draw,very thick] {};
	\node(4) at (-0.25,3) [scale=0.75,circle,draw,very thick] {};
	\node(5) at (2.25,1.5) [scale=0.75,circle,draw,very thick] {};
	\node(6) at (2.75,2) [scale=0.75,circle,draw,very thick] {};
	\node(7) at (2.25,2.5) [scale=0.75,circle,draw,very thick] {};
	\node(8) at (2.75,3) [scale=0.75,circle,draw,very thick] {};
	\node(9) at (3.1,3.65) [scale=0.75,circle,draw,very thick] {};

	\draw (1,0.45) ellipse (2.5cm and 1cm);
	\draw (-0.5,2) ellipse (0.65cm and 1.5cm);
	\draw (2.5,2) ellipse (0.65cm and 1.5cm);
	\draw (i')--(j');
 	\node (0,-2)[draw=none,fill=none] {\textbf{\text{$G[U']$}}}; 

	\draw[ultra thick](i) -- (1);
	\draw[ultra thick,dashed](1) -- (2);
	\draw[ultra thick](2) -- (3);
	\draw[ultra thick,dashed](3) -- (4);
	\draw[ultra thick](i') -- (4);
	\draw[ultra thick,dashed](j) -- (5);
	\draw[ultra thick](5) -- (6);
	\draw[ultra thick,dashed](6) -- (7);
	\draw[ultra thick](7) -- (8);
	\draw[ultra thick, dashed](8) -- (9);
	
    \draw (j') .. controls (1.7,2.667) and (3.134,4.667) .. (9);
	
	\draw[-] (i) -- (v);	
	\draw[-] (j) -- (v);	
 	\node (I) at (-0.75,3)[draw=none,fill=none] {\large\textbf{\text{$I$}}}; 
 	\node (J) at (2.25,3)[draw=none,fill=none] {\large\textbf{\text{$J$}}};	
	
        \end{tikzpicture}
        \label{fig:case3cc}
	}
\end{center}
\caption{Constructing the nice odd cycle $C$ in $G_\ell$ in the case 3c.}
\end{figure}

\textbf{Case 4:} If $U$ is an odd cycle with no chords, we just pick some node $v\in U$ as our candidate. For any type (i) inequality $e\in E$, we can pick this edge in our matching. If $e=(i,j)\notin E[U]$, it is easy to see. If one of the endpoints is in $U$, we get the near perfect matching in $G[U]$ with the deletion of this node and pick this edge. If $e \in U$, then we can pick this edge and some edge incident to $v$, if $U$ is not a triangle with both $i\neq v$ and $j\neq v$. In the latter case, we know $v$ has a degree 3 in $G$, else $G[U]$ is a rank 0 inequality from lemma~\ref{lem:rank} ($i,j, v$ form a triangle with $v$ having a degree 2). So we can pick this edge that is incident on $v$.

All other cases, follow the same arguments. We won't have the subcase $U'\subset U$, as any $U' \subset U$ will not be factor critical because $U$ is an odd cycle without chords.
\end{proof}
\bibliographystyle{plain}
\bibliography{geometric_rank}

\end{document}